\newcommand{\bfe}{\mathbf e}
\newcommand{\bfK}{\mathbf K}
\newcommand{\bfX}{\mathbf X}
\newcommand{\bfx}{\mathbf x}
\newcommand{\bfy}{\mathbf y}
\newcommand{\bfmu}{\boldsymbol{\mu}}
\newcommand{\bfSigma}{\boldsymbol{\Sigma}}
\newcommand{\calA}{\mathcal A}
\newcommand{\calD}{\mathcal D}
\newcommand{\calN}{\mathcal N}
\newcommand{\Var}{\mathrm{Var}}
\newcommand{\Cov}{\mathrm{Cov}}
\newcommand{\bbE}{\mathbb E}
\newcommand{\bbR}{\mathbb R}
\newcommand{\hatA}{\hat A}
\newcommand{\hatz}{\hat z}
\theoremstyle{plain}
\newtheorem{theorem}{Theorem}[section]
\newtheorem{lemma}[theorem]{Lemma}
\theoremstyle{definition}
\theoremstyle{remark}
\begin{document}
\justifying %

\title{Improving the Predictability of the Madden-Julian Oscillation at Subseasonal Scales with Gaussian Process Models}

\authors{Haoyuan Chen\affil{1}, Emil Constantinescu\affil{2}, Vishwas Rao\affil{2}, Cristiana Stan\affil{3}.}

\affiliation{1}{Department of Industrial and Systems Engineering, Texas A\&M University,\\ College Station, TX, USA }
\vspace{1mm}
\affiliation{2}{Mathematics and Computer Science Division,
  Argonne National Laboratory,\\
  Lemont, IL, USA}
\vspace{1mm}
\affiliation{3}{Department of Atmospheric, Oceanic and Earth Sciences, George Mason University,\\
  Fairfax, VA, USA}

\correspondingauthor{Emil Constantinescu}{emconsta@anl.gov}

\begin{keypoints}
\item Propose a probabilistic framework for MJO prediction using Gaussian process models and empirical correlations
\item Nonparametric model has a better prediction skill than ANN for the 5 forecast lead days in terms of correlation and overall in terms of RMSE 
\item The Gaussian process model provides the confidence intervals for the forecast at subseasonal scales (3 weeks) on average 
\end{keypoints}

\begin{abstract}
The Madden--Julian Oscillation (MJO) is an influential climate phenomenon that plays a vital role in modulating global weather patterns. In spite of the improvement in MJO predictions made by machine learning algorithms, such as neural networks, most of them cannot provide the uncertainty levels in the MJO forecasts directly. To address this problem, we develop a nonparametric strategy based on Gaussian process (GP) models. We calibrate GPs using empirical correlations and we propose a posteriori covariance correction. Numerical experiments demonstrate that our model has better prediction skills than the ANN models for the first five lead days. Additionally, our posteriori covariance correction extends the probabilistic coverage by more than three weeks.
\end{abstract}

\section*{Plain Language Summary}
The Madden--Julian Oscillation, or MJO, is a significant weather pattern that affects weather, influencing rainfall, temperature, and even storm frequency and intensity. When the MJO is active, it can affect the weather globally. 
To better predict  weather changes with 3-4 weeks in advance , we rely on the ability to predict the MJO's activity. Data-driven methods such as the ones that rely on deep neural networks have been recently employed to make such predictions. By examining existing MJO patterns, neural networks attempt to predict upcoming ones. However, while neural networks are robust enough to predict the MJO's activity, they do not provide confidence intervals for those predictions. To address this shortcoming, we use a model known as the ``Gaussian process" or GP. 
This statistical tool is distinctive because it not only provides predictions but also quantifies the level of confidence in them.

\section{Introduction}
\label{sec:intro}
The Madden--Julian Oscillation (MJO) \cite%
{madden1971detection,madden1972description} is the dominant mode of intraseasonal variability of the tropics \cite{zhang2013madden}. In the tropics, the MJO exerts its influence on weather and modulates  cyclone activity \cite{maloney2000modulation,camargo2009diagnosis} and El Nino Southern Oscillation \cite<ENSO;>{bergman2001intraseasonal,lybarger2019revisiting}. The MJO influence extends outside of the tropics and is one of the important sources of potential predictability on the subseasonal-to-seasonal (S2S) time scales in the extratropics \cite{stan2017review}. Originating in the equatorial Indian Ocean, the MJO propagates eastward along the equator alternating between phases of active and suppressed convection. Traditionally, the amplitude and phase of the MJO have been described by using various MJO indices derived from outgoing longwave radiation (OLR) alone (e.g., OLR MJO Index, OMI; real-time OLR MJO index, ROMI) or in combination with the zonal wind at 850 hPa and 200 hPa (Real-time Multivariate MJO, RMM). The RMM index \cite{wheeler2004all} consists of a pair (in quadrature) of principal component (PC) time series known as RMM1 and RMM2 ($\text{RMM}=\sqrt{\text{RMM1}^2+\text{RMM2}^2}$). RMM1 and RMM2 are the first two PCs of combined OLR and zonal winds in the lower (850 hPa) and upper (200 hPa) troposphere averaged between 15S and 15N.

Despite the MJO’s pivotal role in the climate system, significant gaps remain in our understanding of its underlying mechanisms. Consequently, climate models struggle to accurately reproduce the observed characteristics of the MJO \cite{chen2022mjo}, and forecast systems face limitations in predicting the MJO with skill beyond a two-week lead time \cite{kim2018prediction,lim2018mjo,kim2019mjo}.

 Recent advancements in machine learning (ML) applications in predicting geoscientific phenomena spanning from weather to climate \cite{he2021sub,molina2023review} hold the promise of enhancing the skill of deterministic \cite{love2009real,toms2019testing,silini2021machine,suematsu2022machine,martin2022using,hagos2022observationally} and probabilistic \cite{delaunay2022interpretable} forecast of the MJO. Improvement in the forecast skill has been achieved also by applying ML techniques for correcting the forecasts of dynamical models \cite{kim2021deep,silini2022improving}. The majority of ML models used for MJO prediction are based on artificial neural networks (ANNs). The work of \citeA{delaunay2022interpretable} uses deep convolutional neural networks (CNNs) to quantify the uncertainty.  
 We note that the probabilistic method in \cite{delaunay2022interpretable} is not fully data driven. 
A wide array of ANN architectures has been devised for MJO prediction models. \citeA{toms2019testing}  employed two hidden layers comprising fully connected networks, while \citeA{love2009real} and \citeA{martin2022using} utilized a single hidden layer of fully connected networks. \citeA{suematsu2022machine}  employed recurrent neural networks as a form of reservoir computing, whereas \citeA{silini2022improving} employed them as autoregressive neural networks.

In terms of input variables, some of the ML models for MJO prediction utilize a selected set of atmospheric state variables, including the OLR and zonal winds, to predict one of the MJO indices \cite{toms2019testing,delaunay2022interpretable}. Others focus solely on the atmospheric state variables required for constructing  and predicting the MJO index \cite{martin2022using}. Certain models use the MJO index as both input and output \cite{love2009real,suematsu2022machine,silini2021machine,silini2022improving} or combine it with other climate indices \cite{hagos2022observationally}. Some studies suggest that increasing the number of input variables can enhance MJO forecast skill. Nonetheless, models utilizing only the MJO index as a predictor exhibit comparable forecast skill, highlighting the significance of the ML model’s characteristics. Thus, the prediction of the MJO can be regarded as a nonparametric problem, while most existing ANN models fall under parametric ML techniques. An alternative avenue for exploration lies in \textit{Gaussian processes} (GPs), which represent a nonparametric learning approach that could be harnessed for MJO prediction. The GP approach has been applied to modeling geophysical datasets such as the prediction of tide height \cite{roberts2013gp}. However, this approach is not autoregressive. 

Currently, only one of the ML models proposed for MJO forecasting offers the capability to quantify the forecast uncertainty. The model developed by \citeA{delaunay2022interpretable} predicts both the forecast mean and variance of RMM indices, providing insight into forecast reliability by using a combined model- and data-driven strategy. 
The model assumes a bivariate Gaussian distribution on the CNN \cite{lecun1995convolutional}.  The CNN is trained by maximizing the log-likelihood for each of the forecast lead times. Specifically, the CNN input is a series of daily gridded maps that include zonal wind at 200 hPa and 850 hPa, OLR, sea surface temperature, specific humidity at 400 hPa, geopotential at 850 hPa, and downward longwave radiation at the surface; and the output is the mean and variance of the forecast of RMM1 and RMM2. The output variance represents the intrinsic chaotic (aleatoric) uncertainty in the prediction. In addition, the epistemic uncertainty is estimated by using a Monte Carlo dropout method to produce an ensemble of forecasts. 
We note, however,  that this model assumes no correlation between RMM1 and RMM2 and relies only on the past day $t$ to predict the mean and variance on day $t+\tau$. It overlooks the lag correlation between RMM1 and RMM2 as outlined in \citeA{clivarmjo2009} and the potential influences of the values between day $t$ and day $t+\tau$ on the day $t+\tau$. Additionally, interpreting uncertainties derived from neural network (NN) models can be challenging because  the influence of weights $\theta$ on the NNs is not always clear and NNs may not inherently reflect probabilities. Moreover, the quality of the uncertainty estimates provided by Monte Carlo dropouts depends on choices of architecture designs, and effective design of training procedures is necessary to obtain satisfactory results \cite{verdoja2020notes}.  
Additionally, the recent short and medium range weather forecasting models such as FourCastNet \cite{pathak2022fourcastnet}, GenCast \cite{price2023gencast}, and Aardvark \cite{vaughan2024aardvark} are not amenable for forecasting MJO.

To address these gaps, we present a novel data-driven and autoregressive probabilistic model for forecasting the MJO amplitude and phase that depends only on the past MJO observations. This model harnesses the power of GPs, enabling us not only to make predictions but also to quantify the inherent uncertainties associated with these forecasts. A GP is an extension of the multivariate Gaussian distributions to infinite dimensions. In practical terms, this means that given an input vector, the process will return a probability distribution of the observation vector based on the input. As a result, GPs provide a natural way to quantify uncertainty in predictions. GPs offer greater interpretability and transparency compared to NNs. This clarity stems from the GP's covraiance kernel, which provides more readily understandable insights into the model behavior than the complex array of parameters found in NNs \cite{stein1999interpolation, myren2021comparison}. As statistical models, GPs provide insight into how predictions are made, and the covariance function of a GP reveals the relationships among input features and their impact on predictions. Furthermore, GPs typically involve fewer hyperparameters to tune when compared with NNs, leading to increased computational efficiency.

Specifically, the contributions of this paper are as follows:
\begin{itemize}
    \item Introduction of a probabilistic framework for the MJO based on GP models that are trained using empirical correlations to improve forecast accuracy.
    \item Development of a nonparametric strategy utilizing GP models to directly provide uncertainty levels in MJO forecasts that do not rely on ensemble prediction.
    \item Proposal of a posteriori covariance correction extending probabilistic MJO coverage over three weeks.
    \item Enhancement of interpretability and transparency compared to neural network models, alongside improved computational efficiency due to fewer hyperparameters.
\end{itemize}

The paper is organized as follows. In Section \ref{sec:meth} we present the data utilized in this study and describe our methodology for forecasting the MJO. In Section \ref{sec:metrics} we elaborate on the metrics used for analyzing the performance of the proposed model compared to observations and dynamical forecast systems. Section \ref{sec:res} showcases the results we have obtained in this work. In Section \ref{sec:conc} we discuss our findings and present directions for future work.

\section{Methodology}
\label{sec:meth}
\subsection{Data}
\label{subsec:data}
The daily MJO RMM index dataset \footnote{ \url{http://www.bom.gov.au/climate/mjo/graphics/rmm.74toRealtime.txt}} used in the study is provided by the Bureau of Meteorology. RMM1 and RMM2 values are available from June 1, 1974, to the most recent date. Because of missing values before 1979, we select the January 1, 1979, to December 31, 2023, range for our study. %
The dataset is divided into three subsets: \textit{i}) the training set used to determine the parameters of the prediction and corresponding variance, January, 1, 1979 to December 31, 2016; \textit{ii}) the validation set used to obtain the corrected variance with increasing lags, January 1, 2007 to December 31, 2011; and \textit{iii}) the test set used to verify the results, January 1, 2012 to December 31, 2023. The start of predictions in the validation set $(t_v=\text{Jan--01--2007})$ and test set $(t_0=\text{Jan--01--2012})$ are part of the model input. The training dataset is further divided into $n=10,000$ samples of length $L={40,60}$ days.

\subsection{GP model}%
\label{subsec:alg}

In this work we obtain the probability distribution of predicted RMM indices. The entire algorithm for our method is described in the diagram shown in Figure \ref{fig:gp-mjo alg}. The details related to time series prediction and Gaussian process are provided in \ref{sec:back} and the mathematical framework of the proposed method in \ref{sec:alg}.
\begin{figure}[!htb]
    \centering
    \includegraphics[scale=0.38]{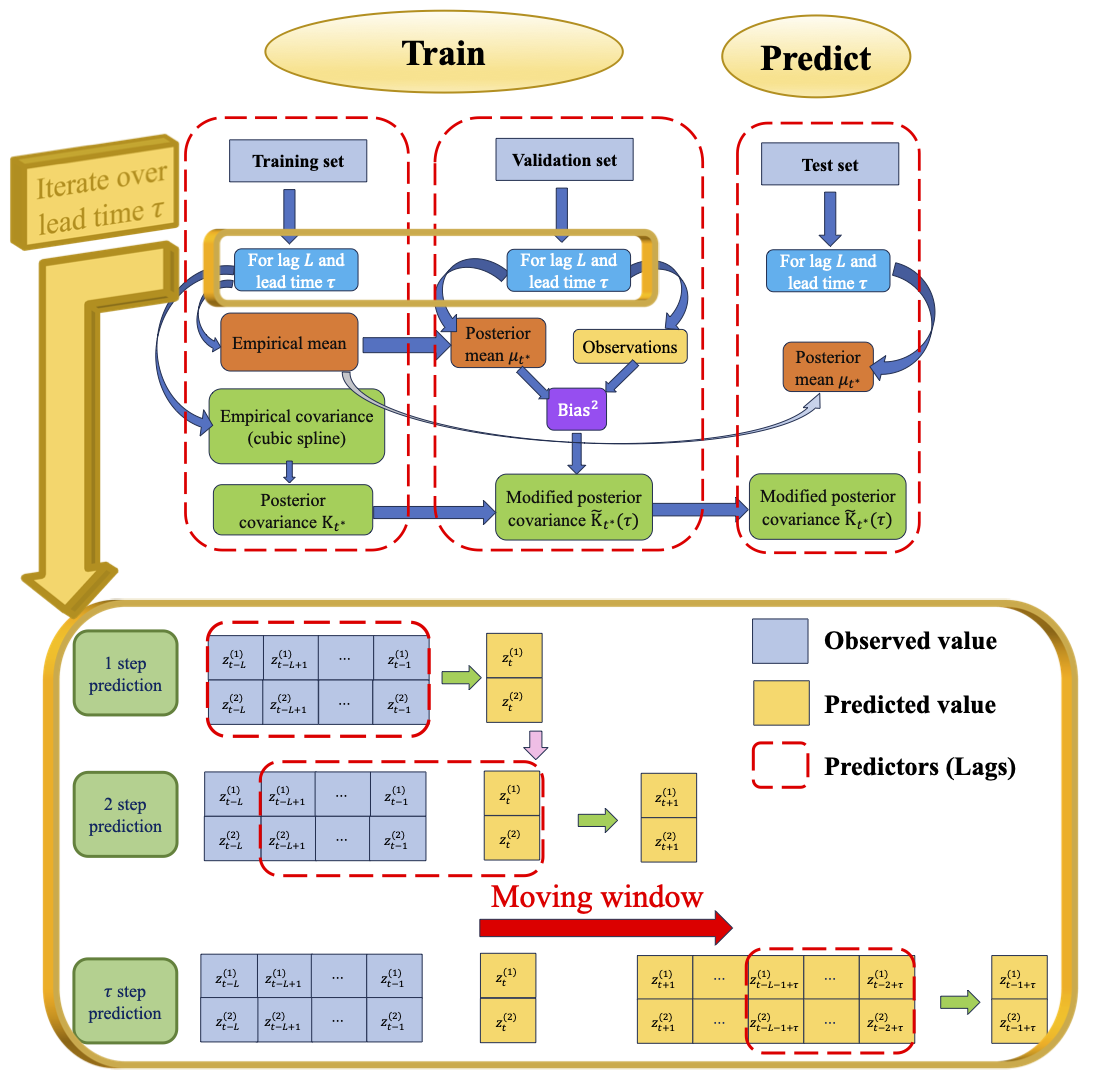}
    \caption{Flowchart of the entire algorithm. \textit{Top:} Diagram of the GP model for the MJO forecast. The blue arrows indicate the order of operations in the algorithm. $t^*$ represents the predicted timestamp, $\text{Bias}^2$ is the square of the bias between the predicted values and the true observations. \textit{Bottom:} Iterated method for the multistep time series forecasting for two outputs with lag = $L$, lead time = $\tau$ ($\tau>L$). $z_t^{(1)}$, $z_t^{(2)}$ are the values of RMM1 and RMM2 at time $t$. The green arrows indicate one-day-ahead predictions. The red arrows indicate the moving window of the predictors. Including the predictions from the previous step as predictors in the current step is indicated by the pink arrow. See \ref{sec:alg} for more details.}
    \label{fig:gp-mjo alg}
\end{figure}

We denote the values of RMM1 and RMM2 on the $t$th day by $z_t^{(1)}$ and $z_t^{(2)}$, respectively. As shown by the diagram in Fig. \ref{fig:gp-mjo alg}, the input to the GP model is a contiguous time series of RMM1 and RMM2 of length $L$ (blue rectangles). $L$ is referred to as lag in days and corresponds to $T_1=T_2=L$ in \ref{sec:back}. The goal of this work is to obtain the predictive distribution of the vector $[z_t^{(1)}, z_t^{(2)}]^\top$ (yellow rectangles) at the next $\tau$ times conditioned on the previous $L$ days:
\begin{linenomath*}
\begin{equation}
    p\Big(
    \left.
    \begin{bmatrix}
        z_{L+1:L+\tau}^{(1)}\\
        z_{L+1:L+\tau}^{(2)}
    \end{bmatrix}
    \right| \begin{bmatrix}
        z_{1:L}^{(1)}\\
        z_{1:L}^{(2)}
    \end{bmatrix}; \Theta \Big)\,,
\end{equation}
\end{linenomath*}
where $\Theta $ is the parameter of the distribution. 
We will model $[ z_t^{(1)}, z_t^{(2)}]^\top$ as a bivariate GP.

The model employs a classical regression algorithm based on one-step-ahead Gaussian process predictions. The one-step-ahead approach involves making predictions at step $k$ using all available information up to step $k-1$. This information is assumed to be Gaussian (normal) distributed. A Gaussian process (GP) is a collection of random variables, such that any finite set of which has multivariate Gaussian distribution \cite{williams2006gaussian}. A GP is specified by two functions: the mean function $\mu(\cdot)$ and the covariance function $K(\cdot, \cdot)$. The mean function represents the expected value of the process at any given time. It provides a baseline prediction and captures the trend of the timeseries. The covariance function, also known as the kernel, describes how points in the time series are related to each other. It captures the periodicity and other patterns in the data as well as the uncertainties in the time series. Using the GP model, the time series of RMM1 and RMM2 can be modeled as:
\begin{linenomath*}
    \begin{equation}
        f(Z) \sim \mathcal{N} ( \mu( Z ), 
     K(Z, Z') ),
    \end{equation}
\end{linenomath*}
where $Z = \begin{bmatrix}
    z_t^{(1)}\\
    z_t^{(2)}
\end{bmatrix} 
 =\begin{bmatrix}
    \text{RMM1}(t)\\
    \text{RMM2}(t)
\end{bmatrix}$, and $Z' = \begin{bmatrix}
    z_{t'}^{(1)}\\
    z_{t'}^{(2)}
\end{bmatrix} 
 =\begin{bmatrix}
    \text{RMM1}(t')\\
    \text{RMM2}(t')
\end{bmatrix}$, $f(Z)$ is the bivariate time series of RMMs, where $t,t'$ represent all the time indexes in the series.

During the training, the model takes as input $n$ overlapping batches of RMM1 and RMM2 indices, each of length $L$. The training data is then divided into an input subset $\bfX^{(1:2)}=[\bfX^{(1)};\bfX^{(2)}]$ and an output subset $\bfy^{(1:2)}=[\bfy^{(1)};\bfy^{(2)}]$, each of length $2L$. These subsets are used to estimate an empirical mean by the average of the corresponding subsets. The empirical covariance function is estimated by partitioning the training data into four blocks that represent the covariance between all inputs $\Cov[\bfX^{(1:2)},\bfX^{(1:2)}]$, covariance between all outputs $\Cov[\bfy^{(1:2)},\bfy^{(1:2)}]$, cross-covariance between inputs and outputs $\Cov[\bfX^{(1:2)},\bfy^{(1:2)}]$, and cross-covariance between outputs and inputs $\Cov[\bfy^{(1:2)},\bfX^{(1:2)}]$. The cross- and auto-covariance of the RMMs is modeled using a cubic spline interpolation of the cross- and auto-correlations of the indices, shown in Figure \ref{fig:entire corrs of rmms}.

During the validation, the empirical mean and covariance are used to predict the posterior mean $\bfmu_{t^*}$ and covariance $\bfK_{t^*}$ at time $t^*$. The details of these calculations are provided in the Appendix \ref{subsec:empirical GPs}. As the one-step-ahead prediction is iterated forward, the last prediction becomes input for the next prediction (the red dashed rectangle). Therefore, when predictions are carried out into the future, ``observations'' are replaced by the predictions. As the prediction window moves farther ahead of the start time, more and more components of the input vectors are replaced by GP predictions. This process introduces systematic uncertainties because the covariance is related only to the lag value $L$ and not to the lead time $\tau$ of the prediction or the predictor values. At leads beyond $L$ the predictive variance should increase because of the uncertainties introduced by replacing observations with predicted values. The covariance function must be corrected to account for the additional uncertainty. We design the correction by computing the average variance bias between the posterior mean and true observations. This bias is then added to the covariance function at each forecast lead time to obtain the modified posterior covariance $\Tilde{\bfK}_{t^*}(\tau)$. The details of these calculations are provided in the Appendix \ref{subsec:cov update}.

One important element of the GP model is the confidence interval of the forecast, which is the confidence region of the normal distribution characterized by the posterior mean and corrected covariance function. \citeA{johnson2002applied} have shown that $(1-\alpha)$ confidence region of the $p$-variate (or multivariate) normal distribution is a hyperellipsoid bounded by chi-square distribution with $p$ degrees of freedom at the level $\alpha$. Since RMMs are bivariate time series, here $p=2$ in our GP model. Therefore, the ellipsoid of the $(1-\alpha)$ confidence region for the GP model is centered on the posterior mean with the axes $\pm \chi_{2}(\alpha) \sqrt{\lambda_i} \bfe_i$, $i=1, 2$, where $\{\lambda_i\}_{i=1}^{2}$ and $\{\bfe_i\}_{i=1}^2$ are the eigenvalues and eigenvectors of the corrected covariance $\Tilde{\bfK}_{t^*}(\tau)$. 

A limitation of this confidence interval estimation is that it relies on normality assumptions; nevertheless, due to its relatively smooth behavior, it is a reasonable assumption, which is also supported by our numerical results. 

\begin{figure}
    \centering
    \includegraphics[scale=0.225]{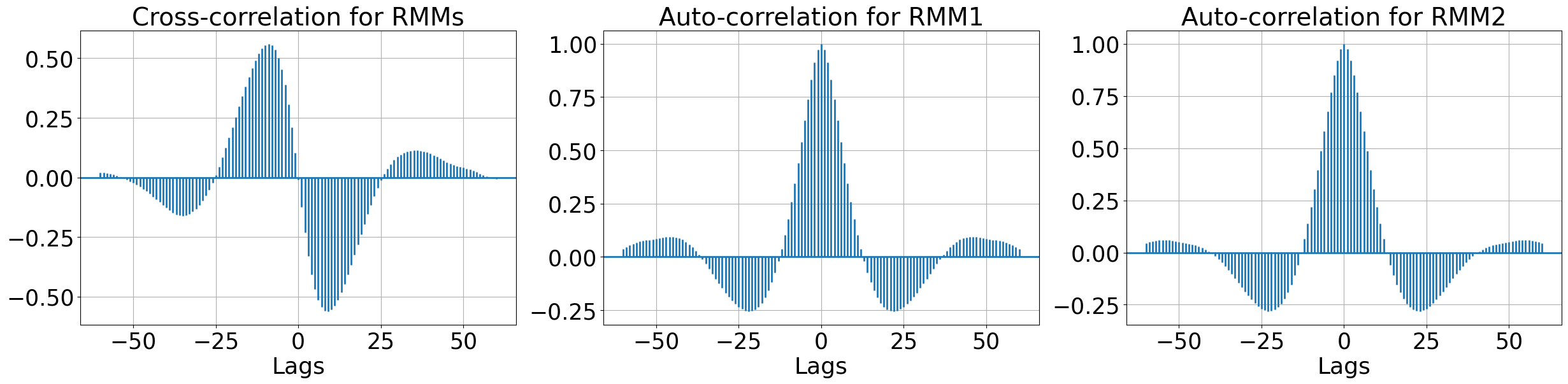}
    \caption{Cross-correlations and auto-correlations of RMMs with maximum lag = 60 days.}\vspace{-0.5cm}
    \label{fig:entire corrs of rmms}
\end{figure}

\section{Metrics}
\label{sec:metrics}
We will use two different types of quantitative metrics to analyze the performance of our models.  

\subsection{Deterministic prediction skill}
For the deterministic prediction skill, we use the predictive mean of the GP model, obtained from equation (\ref{eq:emp-conditional-mean}), as the RMM predictions, denoted by $(\hatz_t^{(1)}, \hatz_t^{(2)})$ in the subsequent equations.  
The performance of the model is measured by the bivariate correlation coefficient (COR) and root mean squared error (RMSE) defined as follows:
\begin{linenomath*}
\begin{equation}\label{eq:cor}
    \text{COR}(\tau) = \frac{\sum_{t=1}^{n_p} \big( z_{t}^{(1)} \hatz_{t}^{(1)}(\tau) + z_{t}^{(2)} \hatz_{t}^{(2)}(\tau) \big) }{ \sqrt{ \sum_{t=1}^{n_p} \Big( \big( z_t^{(1)} \big)^2 + \big( z_t^{(2)} \big)^2 \Big) } \sqrt{ \sum_{t=1}^{n_p} \Big( \big( \hatz_{t}^{(1)}(\tau) \big)^2 + \big( \hatz_{t}^{(2)}(\tau) \big)^2 } \Big) },   
\end{equation}
\end{linenomath*}
\begin{linenomath*}
\begin{equation}\label{eq:rmse}
    \text{RMSE}(\tau) = \sqrt{\frac{1}{n_p} \sum_{t=1}^{n_p} \Big( \big( z_{t}^{(1)} - \hatz_{t}^{(1)}(\tau) \big)^2 + \big( z_{t}^{(2)} - \hatz_{t}^{(2)}(\tau) \big)^2 \Big) },
\end{equation}
\end{linenomath*}
where $z_{t}^{(1)}$ and $z_{t}^{(2)}$ are the observations of RMM1 and RMM2 on the $t$th day in the test set, $\hatz_{t}^{(1)}(\tau)$ and $\hatz_{t}^{(2)}(\tau)$ are the predictions of RMM1 and RMM2 on the $t$th day in the test set  for the lead time of $\tau$ days, and $n_p$ is the number of the predictions.

We also analyze the phase error $E_{\phi}$ and the amplitude error $E_{A}$ of RMMs defined as
\begin{linenomath*}
\begin{equation}\label{eq:phase error}
    E_{\phi}(\tau) = \frac{1}{n_p} \sum_{t=1}^{n_p} \tan^{-1}
    \Big(
    \frac{ 
    z_t^{(1)} \hatz_t^{(2)}(\tau) - z_t^{(2)} \hatz_t^{(1)}(\tau) 
    }{
    z_t^{(1)}\hatz_t^{(1)}(\tau) + z_t^{(2)}\hatz_t^{(2)}(\tau)
    }
    \Big),
\end{equation}
\end{linenomath*}
\begin{linenomath*}
\begin{equation}\label{eq:amplitude error}
    E_{A} (\tau) = \frac{1}{n_p} \sum_{t=1}^{n_p} \big( \hatA_{t}(\tau) - A_{t} \big).
\end{equation}
\end{linenomath*}
The phase error \cite{rashid2011prediction} $E_{\phi}$ is the angle in degrees ($-180^{\circ} - 180^{\circ}$), which represents the averaged angle between the observed RMM vector $(z_t^{(1)}, z_t^{(2)})$, and the predicted RMM vector $(\hatz_t^{(1)}, \hatz_t^{(2)})$. $A_{t}$ is the observation of RMM amplitude on the $t$th day in the test set, and $\hatA_{t}(\tau) \colon = \sqrt{ \big(\hatz_{t}^{(1)}(\tau) \big)^2 + \big( \hatz_{t}^{(2)}(\tau) \big)^2 }$ is the predicted amplitude on the $t$th day in the test set for the lead time of $\tau$ days. The evaluation is conducted for two values of the lag, $L=40,60$, size of the training set $n = 10000$, size of the validation set $n_v=2000$, number of predictions for computing the errors $n_p=528$, and forecast lead time $\tau=1,2,\ldots,60$.

To better visualize the skill of the model for the MJO phase, we also assess the model’s skill by the Heidke skill score (HSS) \cite{heidke1926berechnung} defined in equation (\ref{eq:has for mjo}). 

HSS is a measure of how well a forecast is relative to a randomly selected forecast. HSS plays a crucial role in evaluating the accuracy of deterministic forecasts.  The definition of HSS \cite{hyvarinen2014probabilistic} is given by
\begin{linenomath*}
\begin{equation}\label{eq:hss}
    \mathrm{HSS} = \frac{\mathrm{PC}-E}{1-E} = \frac{2(ad-bc)}{(a+b)(b+d)+(a+c)(c+d)},
\end{equation}
\end{linenomath*}
where $a$, $b$, $c$, $d$ are different numbers of cases observed to occur in each category in the contingency table (see Table \ref{tab:hss-contingency}); PC is the proportion correct defined as
\begin{linenomath*}
\begin{equation}\label{eq:hss-pc}
    \mathrm{PC}=\frac{a+d}{a+b+c+d};
\end{equation}
\end{linenomath*}
$E$ is the expectation of the probability of the correct forecasts defined as
\begin{linenomath*}
\begin{equation}\label{eq:hss-expect}
    E = p( \{z_{t} \in \calA,\hatz_{t} \in \calA \} \cup \{z_{t} \notin \calA ,\hatz_{t} \notin \calA \} )
    = p(z_{t} \in \calA ) p(\hatz_{t} \in \calA ) + p(z_{t} \notin \calA) p(\hatz_{t} \notin \calA);
\end{equation}
\end{linenomath*}
and its maximum-likelihood estimate is given by
\begin{linenomath*}
\begin{equation}\label{eq:hss-expect-mle}
    E = \Big( \frac{a+c}{a+b+c+d} \Big) \Big( \frac{a+b}{a+b+c+d} \Big) + \Big( \frac{b+d}{a+b+c+d} \Big) \Big( \frac{c+d}{a+b+c+d} \Big).
\end{equation}
\end{linenomath*}
\begin{table}[h]%
    \centering
    \begin{tabular}{|c|c|c|c|}
        \hline
        \multicolumn{2}{|c|}{\multirow{2}{*}{\# of cases}} & \multicolumn{2}{c|}{Observation $z_{t} \in \calA$}\\
        \cline{3-4}
        \multicolumn{2}{|c|}{} & True & False \\
        \hline
        \multirow{2}{*}{Forecast $\hatz_{t} \in \calA$} & True & $a$ (true positive/hit) & $b$ (false positive/false alarm) \\
        \cline{2-4}
        & False & $c$ (false negative/miss) & $d$ (true negative/correct rejection) \\
        \hline
    \end{tabular}
    \vspace{2mm}
    \caption{Contingency table}
    \label{tab:hss-contingency}
\end{table}
To combine the strong/weak MJO and 8 phases, we divide the plane into 9 parts and introduce phase 0 (inactive MJO) by defining $\{ \calA_i \}_{i=0}^{8}$ as follows:
\begin{linenomath*}
\begin{equation}\label{eq:phase 0 for hss}
   ( z_t^{(1)}, z_t^{(2)} ) \in \calA_0 \iff \sqrt{(z_t^{(1)})^2 + (z_t^{(2)})^2} < 1,
\end{equation}
\end{linenomath*}
\begin{linenomath*}
\begin{equation}\label{eq:phase 1-8 for hss}
    ( z_t^{(1)}, z_t^{(2)} ) \in \calA_i\; (i=1,\ldots,8) \iff
    \begin{cases}
        & \mathrm{atan2}(z_t^{(2)},z_t^{(1)})  \in (-\pi,-\frac{3}{4}\pi] + \frac{\pi}{4} (i-1)  \\
        & \quad\mathrm{and}\quad \sqrt{(z_t^{(1)})^2 + (z_t^{(2)})^2} \geq 1,
    \end{cases}
\end{equation}
\end{linenomath*}
where $(z_t^{(1)}$, $z_t^{(2)})$ are the observations of (RMM1, RMM2) at time $t$ and atan2 is the 2-argument arctangent function whose range is $(-\pi,\pi]$. For the strong/weak MJO ($i=0$) and each MJO phase $i$ ($i=1,\ldots,8$), we can calculate the corresponding HSS$(i)$ by setting $\calA \colon =\calA_i$ in equations \eqref{eq:phase 0 for hss} and \eqref{eq:phase 1-8 for hss} and applying them to $\calA$ in Table \ref{tab:hss-contingency}. Hence,
\begin{linenomath*}
\begin{equation}\label{eq:has for mjo}
    \mathrm{HSS}(i) = \frac{2(a_i d_i - b_i c_i)}{(a_i + b_i)(b_i + d_i) + (a_i + c_i)(c_i + d_i)},
\end{equation}
\end{linenomath*}
where $a_i=\mathbf{card}\left. \Big(t  \right| ( z_t^{(1)}, z_t^{(2)} ) \in \calA_i$ and $( \hatz_t^{(1)}, \hatz_t^{(2)} ) \in \calA_i \Big)$; $b_i=\mathbf{card}\left. \Big(t  \right| ( z_t^{(1)}, z_t^{(2)} ) \notin \calA_i$ and $( \hatz_t^{(1)}, \hatz_t^{(2)} ) \in \calA_i \Big)$; $c_i=\mathbf{card}\left. \Big(t  \right| ( z_t^{(1)}, z_t^{(2)} ) \in \calA_i$ and $( \hatz_t^{(1)}, \hatz_t^{(2)} ) \notin \calA_i \Big)$; $d_i=\mathbf{card}\left. \Big(t  \right| ( z_t^{(1)}, z_t^{(2)} ) \notin \calA_i$ and $( \hatz_t^{(1)}, \hatz_t^{(2)} ) \notin \calA_i \Big)$, $i=0,1,\ldots,8$; $(z_t^{(1)}$, $z_t^{(2)})$ are the observations of (RMM1, RMM2) at time $t$; and $(\hatz_t^{(1)}$, $\hatz_t^{(2)})$ are the predictions of (RMM1, RMM2) at time $t$. Note that $\mathbf{card}\left(\cdot \right)$ denotes the cardinality of the set, which is the number of elements in the set. In our case, it represents the
number of days $t$ where the corresponding condition is met.

\subsection{Probabilistic prediction skill}
The probabilistic nature of the GP model allows a natural evaluation of the probabilistic skill of the MJO prediction. We assess the model using two probabilistic scores: continuous ranked probability score (CRPS) \cite{hersbach2000decomposition} and the log score \cite{roulston2002evaluating}.

CRPS is a scoring rule that compares a single ground truth value to a cumulative distribution function, first introduced in \cite{matheson1976scoring} and widely used in weather forecasts. It is defined as 
\begin{linenomath*}
    \begin{equation}
        \text{CRPS}(F_D,y) = \int_{\bbR} \Big( F_D(x) - H(x \geq y) \Big)^2 \,dx,
    \end{equation}
\end{linenomath*}
where $F_D$ is the cumulative distribution function of the forecasted distribution $D$, $H$ is the Heaviside step function and $y\in\bbR$ is the observation. We assume the forecasted distribution $D$ is Gaussian distribution, then the CRPS formula is given by
\begin{linenomath*}
    \begin{equation}
    \label{eq:crps gaussian}
        \text{CRPS}(\calN(\mu, \sigma^2), y) = \sigma \left( 
        \omega (2 \Phi(\omega)-1) + 2\phi(\omega) - \frac{1}{\sqrt{\pi}}
        \right), \quad
        \omega = \frac{y-\mu}{\sigma},
    \end{equation}
\end{linenomath*}
where $\Phi(\cdot)$ and $\phi(\cdot)$ are cumulative distribution function and probability density functions of the standard normal distribution $\calN(0,1)$. The CRPS for MJO is then computed as the sum of the CRPS for RMM1 and RMM2 following \cite{marshall2016visualizing}.

The negative log-likelihood of the normal distribution is used to compute the log score, which is given as follows:
\begin{linenomath*}
    \begin{equation}
    \label{eq:log-like}
        \text{Log-Score}(\tau)
        = \frac{1}{n_p} \sum_{t=1}^{n_p} 
        \frac{1}{2} \left(  
        \log(2\pi) + 
        \log \vert \bfSigma_{t} (\tau) \vert + 
        \begin{bmatrix}
            z_t^{(1)} - \hatz_t^{(1)}(\tau)\\ 
            z_t^{(2)} - \hatz_t^{(2)}(\tau)
        \end{bmatrix}^{\top} 
        \bfSigma_t(\tau)^{-1} 
        \begin{bmatrix}
            z_t^{(1)} - \hatz_t^{(1)}(\tau)\\ 
            z_t^{(2)} - \hatz_t^{(2)}(\tau)
        \end{bmatrix}
        \right),
    \end{equation}
\end{linenomath*}
where $\bfSigma_t(\tau) \in \bbR^{2 \times 2}$ is the covariance matrix of the predictions of RMM1 and RMM2 on the $t$th day for the lead time of $\tau$ days, and $\vert \bfSigma_{t} (\tau) \vert$ is the determinant of the covariance matrix $\bfSigma_{t} (\tau)$.

\section{Results}
\label{sec:res}
In this section we present the results of the prediction skill of our model in Section \ref{subsec:pred skill}, the results of HSS for each MJO phase over the forecast lead time in Section \ref{subsec:hss}, and the visualizations of the uncertainty quantification with the GP model in Section \ref{subsec:uq}.

\subsection{Prediction skill}
\label{subsec:pred skill}

\begin{figure}[!htb]
    \centering
    \includegraphics[width=\textwidth]{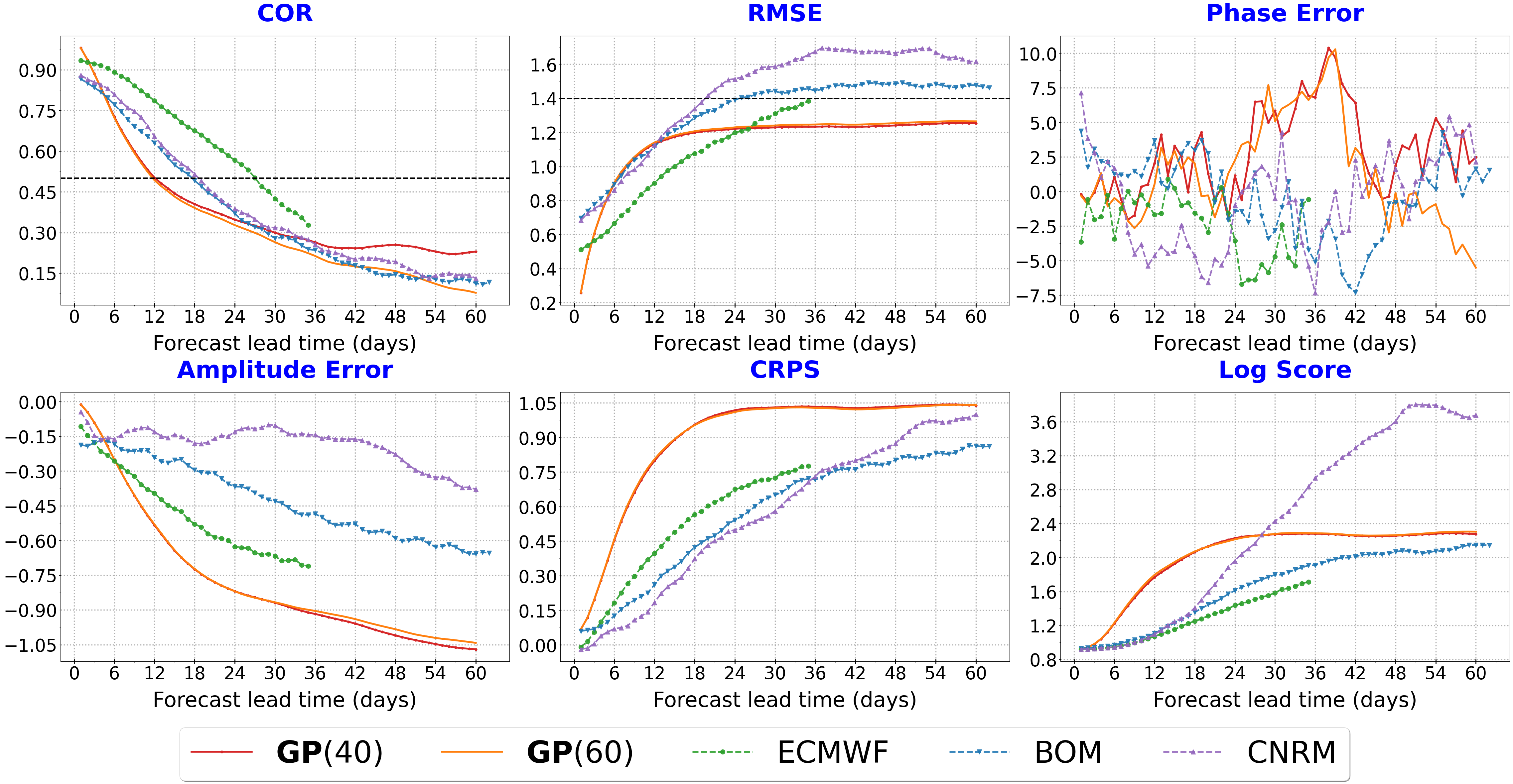}
    \caption{Prediction skill quantifiers and errors of the GP model with lag $L=40$, $60$, respectively, compared to three models in the sub-seasonal to seasonal prediction project (S2S). \textit{Top:} COR (higher is better), RMSE (lower is better), and phase error (degress) (lower is better) over 528 predictions. \textit{Bottom:} Amplitude error (lower absolute is better), CRPS (lower is better), and log score (negative log-likelihood) (lower is better) over 528 predictions. Red lines and orange lines represent the GP model with lag $L=40$ and $L=60$ respectively, green lines represent the European Center for Medium-Range Weather Forecasts (ECMWF), blue lines represent the Bureau of Meteorology (BOM), purple lines represent the Centre National de Recherche M\'et\'eorologiques (CNRM).}
    \label{fig:entire errors}
\end{figure}

Figure \ref{fig:entire errors} presents the results of the prediction skill and errors of the GP model compared to the sub-seasonal to seasonal prediction project (S2S) models, including the European Center for Medium-Range Weather Forecasts (ECMWF) with 35 forecast lead days, Bureau of Meteorology (BOM) with 62 lead days, and Centre National de Recherche M\'et\'eorologiques (CNRM) with 60 lead days. The metrics are calculated from predictions made on different days for each model, as the S2S models are initialized on different dates. We calculated the metrics for the GP model and ECMWF over the same period from January 3, 2012, to January 10, 2017, and for the BOM and CNRM models over the same period from January 1, 1993, to December 15, 2014. The values COR $=0.5$ and RMSE $=1.4$ are the commonly used skill thresholds for a climatological forecast \cite{rashid2011prediction}. In this figure we see that our model has a prediction skill of 12 days for both lag $L=40$ and $L=60$ with threshold COR $=0.5$. The ECMWF model demonstrates the best overall performance for COR. While the GP model performs best during the first three forecast lead days, it declines rapidly and eventually reaches similar COR values as the BOM and CNRM models. Regarding the RMSE, the prediction skill is longer than 60 days for $L=40$ and $L=60$ with threshold RMSE $=1.4$. 
The GP model has a much lower RMSE than S2S models during the first three forecast days, then RMSE increases to values larger than in ECMWF over the next 20 lead days. It eventually stabilizes around RMSE $=1.25$, outperforming BOM and CNRM across the full 60 forecast lead days. The fast decline of COR for the GP model is due to the fact that we use the empirical correlations from historical RMMs of large size in our model. Specifically, when the forecast lead time increases, the predicted RMMs will become smaller and smoother because of the empirical correlations over a long period of time, giving rise to the smaller variations of RMMs than the true observations and therefore a lower COR. The small value of the predicted RMMs also accounts for the tiny changes in RMSE after day 24 of the forecast lead time. As for the phase error (the angle of RMMs in degrees), %
we observe that most phase errors for the GP model are positive and larger than ECMWF and CNRM, indicating a faster propagation relative to the observations. 
For the amplitude errors, we note that all of them are negative. Because of the smaller values of the predicted RMMs of the GP model with forecast lead time increasing, the amplitude is underestimated, resulting in negative and worse amplitude errors than S2S models. The GP model performs worse than the S2S models in terms of probabilistic skill, as measured by CRPS and the log score (negative log-likelihood). This is due to the larger variances in the GP model, causing its probability distribution to diverge significantly from the observations. We also note that the results with lags $L=40$ and $L=60$ are similar; therefore, for the rest of the paper we will show only results with lags $L=40$.

\subsection{HSS}
\label{subsec:hss}
\begin{figure}[!htb]
    \centering
    \includegraphics[scale=0.35]{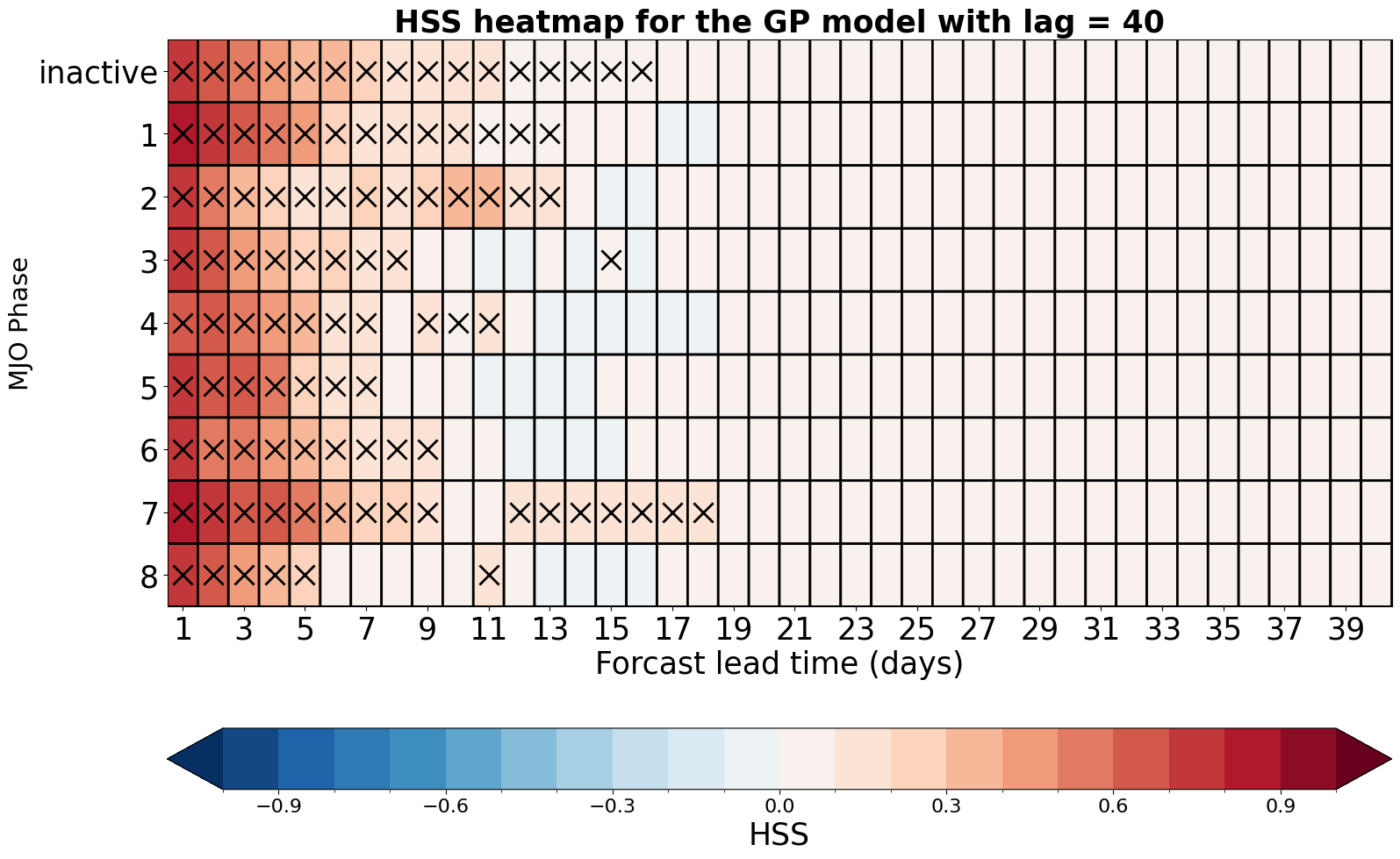}
    \caption{HSS heatmap for the GP model over 528 predictions with lag $L = 40$ (higher HSS is better). The cells with black cross marker ``X'' represent the significant samples from \textit{Fisher's exact test} with the critical value $\alpha=0.05$.}
    \label{fig:hss}
\end{figure}
Figure \ref{fig:hss} shows the HSS heatmap for the combination of phases (1--8) and inactive (weak) MJO for the forecast lead times (1--40 days) over 528 predictions. From this figure we can see that our model has a positive skill for most phases and forecast lead times and has high skill scores for the first 10 forecast lead days for all 8 phases and inactive MJO. We also use Fisher's exact test \cite{fisher1922interpretation} with critical value $\alpha=0.05$ to determine the significant samples for HSS. The cells with the black cross marker in Figure 1 indicate the statistically significant associations between observations and forecasts, which is consistent with the results of Section \ref{subsec:pred skill} indicating that the model has a good prediction skill within the first 12 days of the forecast lead time. The results reported above provide better skill than the ANN model results reported by \cite{kim2021deep} for the first five forecast lead days in terms of correlation coefficient and overall in terms of root mean square error.

\subsection{Uncertainty quantification}
\label{subsec:uq}
Here we pick two samples (Nov--03--2012 to Jan--01--2013, Jan--14--2013 to Mar--14--2013) out of $n_p=528$ predictions with $\tau=60$ forecast lead days to present the uncertainty quantification of the predicted MJO.
We compare the GP model with the ECMWF ensemble means, including standard deviations from 11 members, which performs best among the S2S models, as well as with observations from BOM. Figure \ref{fig:entire ts nov-jan} shows an example in which the MJO is mostly inactive within 60 days, and Figure \ref{fig:entire ts jan-mar} shows an example of an active MJO event. These two examples show that predictions of the GP model capture the general trend seen in observations and outperforms ECMWF during the first 5 lead days. The $\pm \sigma$ confidence intervals (CI) grow as the forecast lead time increases and cover a larger portion of the observation range compared to the ECMWF model's CI. To obtain the complete picture of MJO prediction, we summarize results in Figure \ref{fig:entire mjo phase diagram example}, which shows the MJO phase diagram for Nov--03--2012 to Jan--01--2013 and Jan--14--2013 to Mar--14--2013 of our model with $68.0\%$ confidence region. The figure clearly shows that almost all observations (black lines) mostly lie within the confidence region (colorful shadings), which demonstrates the quality of the uncertainty quantification of our model. Animated phase diagrams can also be found on the project website \url{https://gp-mjo.github.io/}, which show how the elliptical confidence region enlarges with time.
\begin{figure}[!htb]
    \centering
    \includegraphics[width=\textwidth]{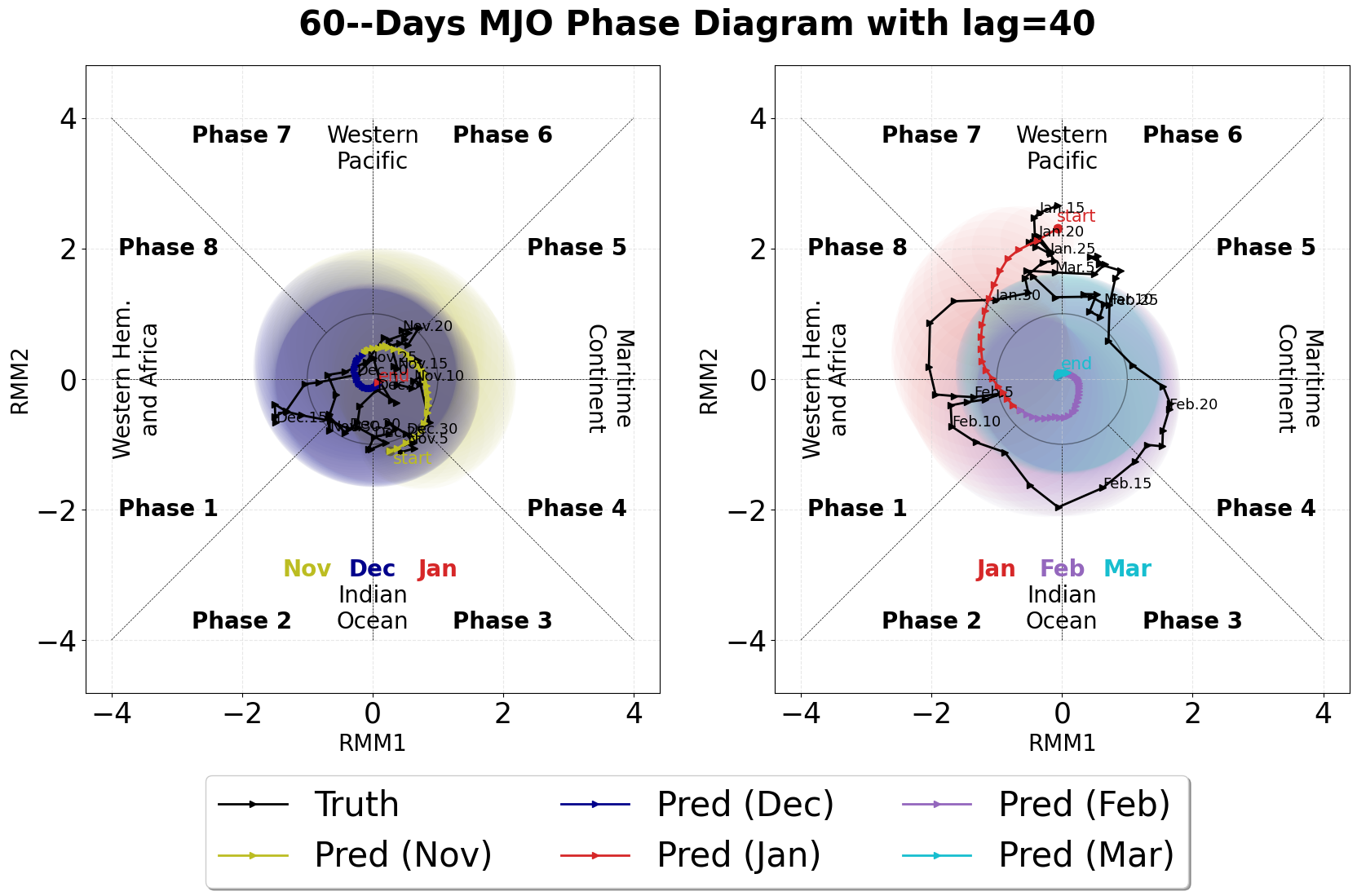}
    \caption{\textit{Left:} 60-days MJO phase diagram for Nov--03--2012 to Jan--01--2013 with lag $L = 40$. Black lines are observations (truth). Olive lines are predictions in November, and olive shadings are 68\% confidence regions (CR) in November.  
    Dark blue lines are predictions in December, and dark blue shadings are CR in December. Red lines are predictions in January, and red shadings are CR in January.  \textit{Right:} 60-days MJO phase diagram for Jan--14--2013 to Mar--14--2013 with lag $L = 40$. Black lines are observations (truth). Red lines are predictions in January, and red shadings are CR in January. Purple lines are predictions in February, and purple shadings are CI in February. Cyan lines are predictions in March, and cyan shadings are CR in March.}
    \label{fig:entire mjo phase diagram example}
\end{figure}

\begin{figure}[ht]%
    \centering
    \includegraphics[width=\textwidth]{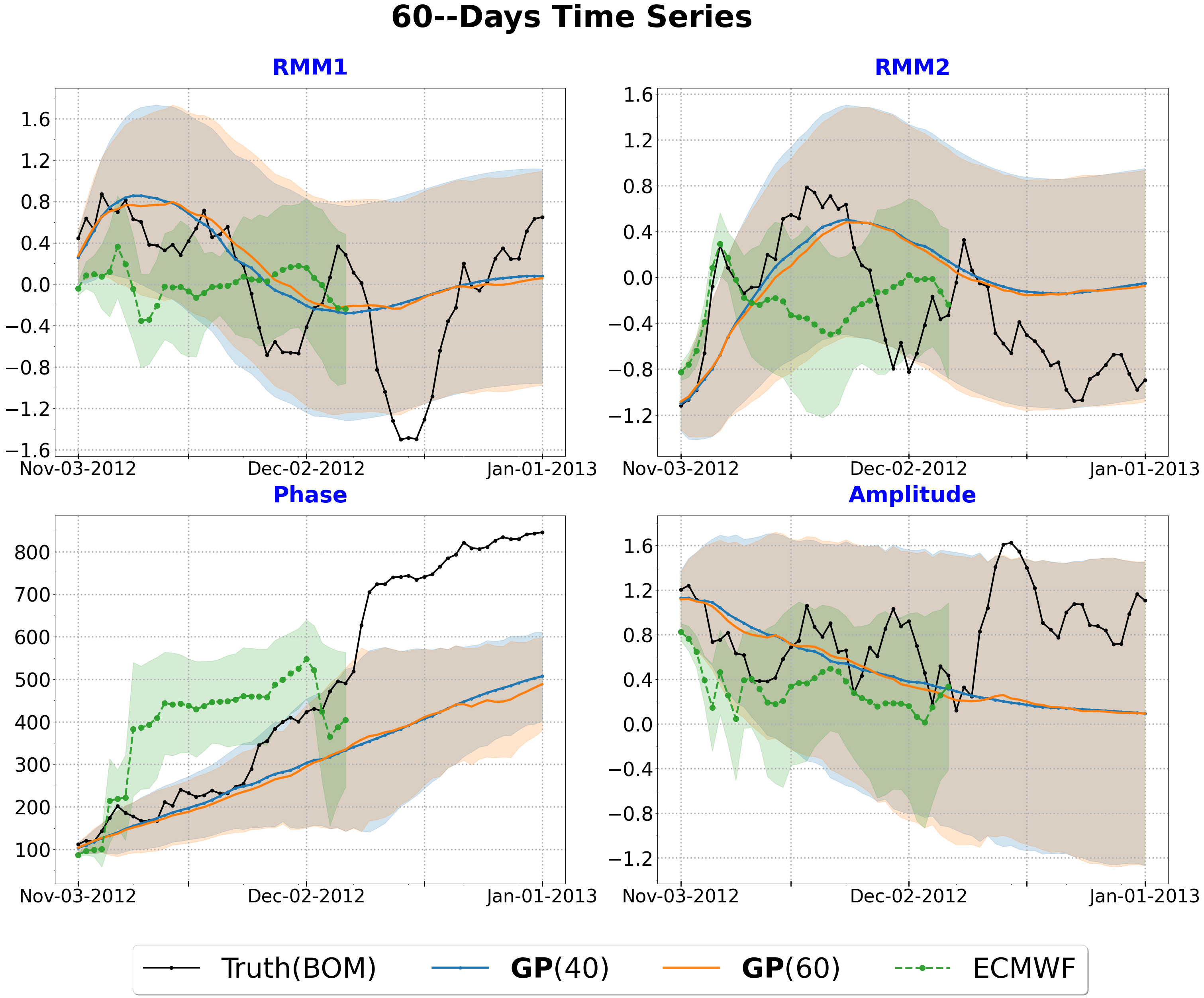}
    \caption{60-days time series of MJO for Nov--03--2012 to Jan--01--2013 for lag $L=40,60$. We denote observations (truth) from the BOM by black dots; predictions of the GP model for lag $L=40$ and $L=60$ by blue cross and orange cross, respectively; $\pm \sigma$ CI of the GP model  
    for lag $L=40$ and $L=60$ by blue shading and orange shading, respectively; predictions of the ECMWF model by green dots, $\pm \sigma$ CI of the ECMWF model by green shading. \textit{Top left}: Time series of RMM1. \textit{Top right}: Time series of RMM2. \textit{Bottom left}: Time series of phase (angle in the degrees on looping). \textit{Bottom right}: Time series of amplitude.}
    \label{fig:entire ts nov-jan}
\end{figure}

\begin{figure}[ht]%
    \centering
    \includegraphics[width=\textwidth]{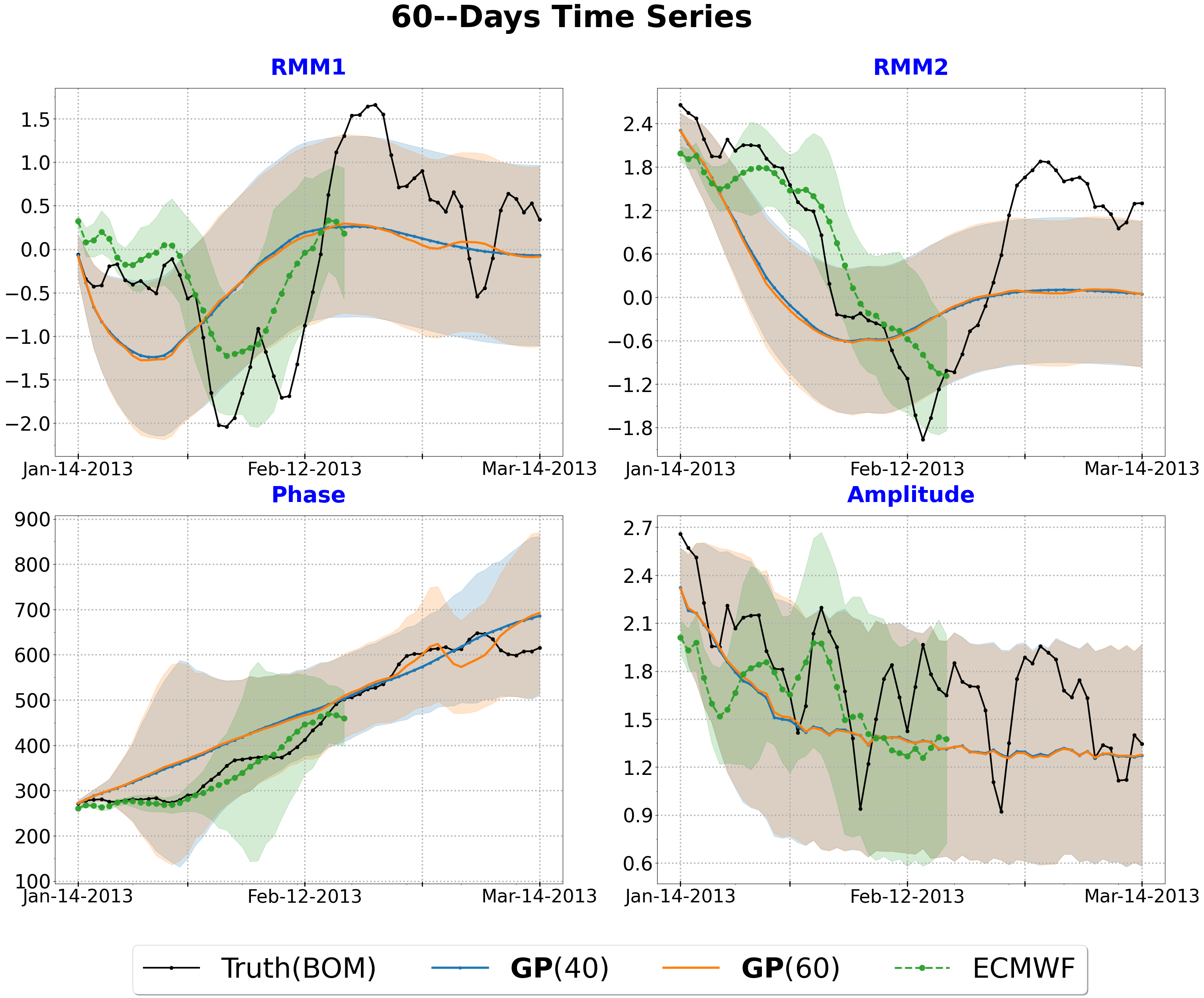}
    \caption{60-days time series of MJO for Jan--14--2013 to Mar--14--2013 for lag $L=40,60$. We denote observations (truth) from the BOM by black dots; predictions of the GP model for lag $L=40$ and $L=60$ by blue cross and orange cross, respectively; $\pm \sigma$ CI of the GP model  
    for lag $L=40$ and $L=60$ by blue shading and orange shading, respectively; predictions of the ECMWF model by green dots, $\pm \sigma$ CI of the ECMWF model by green shading. \textit{Top left}: Time series of RMM1. \textit{Top right}: Time series of RMM2. \textit{Bottom left}: Time series of phase (angle in the degrees on looping). \textit{Bottom right}: Time series of amplitude.}
    \label{fig:entire ts jan-mar}
\end{figure}

\section{Conclusions}
\label{sec:conc}
In this study we have developed a robust, probabilistic, data-driven model to predict the MJO with high accuracy and quantify prediction uncertainty using GPs with empirical correlations. Our methodology primarily focused on employing the daily RMM index dataset from January 1, 1979, to December 31, 2023, to train, test, and validate the model. We have successfully demonstrated that our model's mean prediction of the daily RMM index remains accurate within a 12-day forecast window, as evidenced by our evaluations using metrics including the correlation, RMSE, phase errors, amplitude errors, CRPS, log score, and the HSS.

The specific aspect that provides the model's efficacy lies in the approach used to handle GPs for time series prediction and uncertainty quantification. We avoid the typical need for optimizing hyperparameters, thus streamlining the process and enhancing the model's efficiency and stability. This approach is driven by using training data to empirically determine covariance, which is then fitted to a continuous function. The advantage of this method is twofold. It offsets the need for external hyperparameters and ensures stability, especially for long-term predictions, where the model reverts to the mean or prior. Furthermore, our model is robust to the lags of predictors, maintaining accuracy and reliability in predictions without being significantly impacted by lag beyond a certain threshold. This characteristic is especially notable in the context of long-term forecasting and in scenarios where data input may be subject to variable delays.

Moreover, our prediction also provides uncertainty bounds. The uncertainty in our method is state-independent, meaning it is unrelated to the initialized MJO event and depends solely on lead time. The probabilistic model's confidence region covers the observations well, maintaining an average coverage of close to 60 days. This aspect is crucial for reliable forecasting in dynamic and uncertain climatic conditions governed by the MJO. Assuming that the dynamic model fit through a Gaussian process is optimal, this study indeed suggests that the limit of predictability of RMM1 and RMM2 based on their history alone is constrained to the results presented in this paper. Furthermore, it indicates that the memory of the dynamical system, based on these inputs, is limited to about 40 to 60 days in the past. 

The approach proposed in this study can be improved by including aspects of seasonal variability and adding additional predictors. In our future work we aim to mitigate these limitations by incorporating seasonal factors into the model and expanding the range of physical variables in the inputs. These aspects are expected to improve our GP model performance significantly. Additionally, while effective, our current empirical approach to constructing GPs could be further advanced by exploring parametric methods in modeling GPs. This future direction could potentially offer more nuanced insights and greater precision in our predictions.

In summary, this study introduces a new data-driven method for predicting the MJO, providing a reliable, efficient, and robust model that provides competitive accuracy and offers extensive insight into prediction uncertainties. As we move forward, our focus will be on refining and enhancing this model to address its current limitations and adapt it to the challenges in climatic forecasting.

\appendix
\section{Background}
\label{sec:back}
In this section we review the probabilistic forecasting and the iterative method for the time series forecasting in Section \ref{subsec:iterative meth} and GP models in Section \ref{subsec:gp}.

\subsection{Probabilistic forecasting with an iterative method}
\label{subsec:iterative meth}
In the general probabilistic forecasting problem \cite{rangapuram2018deep,wang2019deep}, we usually denote $M$ univariate time series by $\{ z_{1:T_j}^{(j)} \}_{j=1}^{M}$, where $z_{1:T_j}^{(j)} \colon = (z_{1}^{(j)}, z_{2}^{(j)}, \ldots, z_{T_j}^{(j)})$ is the $j$th time series and $z_{t}^{(j)}$ is the value of the $j$th time series at time $t$, $1 \leq t\leq T_j$. Our goal is to model the distribution of $z_{T_j+1:T_j+\tau}^{(j)}$ at the next $\tau$ time conditioned on the past:
\begin{linenomath*}
\begin{equation}
    p(\left. z_{T_j+1:T_j+\tau}^{(j)} \right| z_{1:T_j}^{(j)}; \Theta), \quad j=1,\ldots,M,
\end{equation}
\end{linenomath*}
where $\Theta$ is the set of the learnable parameters shared by all $M$ time series.

The objective of multistep time series forecasting \cite{weigend2018time,cheng2006multistep,sorjamaa2007methodology} is to predict $M$-variate time series at the next $\tau$ time $\{ z_{T_j+1:T_j+\tau}^{(j)} \}_{j=1}^{M}$ given $\{ z_{1:T_j}^{(j)} \}_{j=1}^M$, where $\tau>1$. A multistep prediction is typically carried out using the iterative method. In this technique, the values computed for each step ahead are sent to the next step as inputs. The iterative method can be written in the autoregressive model as follows:
\begin{linenomath*}
\begin{equation}\label{eq:ar}
    \begin{bmatrix}
        z_t^{(1)}\\
        \vdots\\
        z_t^{(M)}
    \end{bmatrix}
    = \begin{bmatrix}
        f_1(z_{t-T_1:t-1}^{(1)})\\
        \vdots\\
        f_M(z_{t-T_M:t-1}^{(M)})
    \end{bmatrix},
\end{equation}
\end{linenomath*}
where %
$f_1,\ldots,f_M$ are random functions. After the learning process, the predicted values at the next $\tau$ time are given by
\begin{linenomath*}
\begin{equation}
    \hatz_{t+\tau-1}^{(j)}=
    \begin{cases}
        f_j(z_{t-T_j:t-1}^{(j)}) & \text{if $\tau=1$}\\
        f_j(z_{t-T_j-1+\tau:t-1}^{(j)},\hatz_{t:t-2+\tau}^{(j)}) & \text{if $\tau=2,\ldots,T_j$}\\
        f_j(\hatz_{t-T_j-1+\tau:t-2+\tau}^{(j)}) & \text{if $\tau=T_j+1,\ldots$},
    \end{cases}
\end{equation}
\end{linenomath*}
where $j=1,\ldots,M$, $\hatz_t^{(j)}$ is the predicted value of the $j$th sequence of time series at time $t$.  The lower diagram in Figure \ref{fig:gp-mjo alg} illustrates the case where $M=2$, $T_1=T_2=L$ for the iterated method. The iterated method has also been applied to many classical machine learning models such as \textit{recurrent neural networks} \cite{medsker2001recurrent, galvan2001multi,yunpeng2017multi} and \textit{hidden Markov models}  \cite{rabiner1986introduction,rossi2006volatility,horelu2015forecasting}.

\subsection{Gaussian processes}
\label{subsec:gp}
A Gaussian process  \cite{williams2006gaussian} is a collection of random variables such that every finite number of which has a multivariate normal distribution. A GP is defined by a mean function $\mu(\cdot)$ and a covariance function $K(\cdot,\cdot)$ and is denoted by $\mathcal{GP}(\mu(\cdot),K(\cdot,\cdot))$.

Given a dataset $\calD=\{ \bfX,\bfy \}$ comprising the inputs $\bfX=\{ \bfx_i \}_{i=1}^n$ (where $\bfx_i \in \bbR^d$) and the corresponding observations $\bfy = (y_1,y_2,\ldots,y_n)^\top$ (where $y_i \in \bbR$), suppose $y_i = f(\bfx_i)$, where $f\colon \bbR^d \rightarrow \bbR$ is a random function. %
Gaussian process regression assumes that the unknown function is a prior GP, denoted as $f(\cdot) \sim \mathcal{GP}( \mu(\cdot), K(\cdot,\cdot) )$. Then the posterior distribution at a set of test points $\bfX^{*}=\{ \bfx_{i}^{*} \}_{i=1}^m$ (where $\bfx_i^* \in \bbR^d$) has the following form:
\begin{linenomath*}
\begin{equation}\label{eq:conditional-distribute}
    p(\left. f(\bfX^*) \right| 
    \calD ) = \calN( \bbE[f(\bfX^*) \vert \calD] , \Cov[f(\bfX^*) \vert \calD] ),
\end{equation}
\end{linenomath*}
with the posterior mean and covariance as follows:
\begin{linenomath*}
\begin{subequations}\label{eq:conditional-meanvar}
\begin{align}
    \bbE[f(\bfX^*) \vert \calD] &= \mu( \bfX^* ) + K(\bfX^*,\bfX) \big[K(\bfX,\bfX) \big] ^{-1}\left(\bfy - \mu(\bfX) \right),\label{eq:conditional-mean}\\
    \Cov[f(\bfX^*) \vert \calD] &= K(\bfX^*, \bfX^*) - K(\bfX^*,\bfX)
    \big[K(\bfX,\bfX) \big]^{-1} K(\bfX,\bfX^*). \label{eq:conditional-variance}
\end{align}
\end{subequations}
\end{linenomath*}

\section{Algorithm}
\label{sec:alg}
\subsection{Empirical GPs for the bivariate time series}
\label{subsec:empirical GPs}
Here we denote the bivariate time series of RMMs by $\{z_t^{(j)}\}_{t=1}^T$, $j=1,2, \cdots, T$, where $T$ is the length of the entire time series. As before we assume that we model the two time series by a joint GP:
\begin{linenomath*}
\begin{equation}\label{eq:emp-gp for bivariate ts}
    \begin{bmatrix}
    z_t^{(1)}\\
    z_t^{(2)}
\end{bmatrix} \sim \mathcal{GP} \Big(\mu\big( \begin{bmatrix}
    z_t^{(1)}\\
    z_t^{(2)}
    \end{bmatrix}\big), 
    K \big(  \begin{bmatrix}
    z_t^{(1)}\\
    z_t^{(2)}
    \end{bmatrix}, 
    \begin{bmatrix}
    z_{t'}^{(1)}\\
    z_{t'}^{(2)}
    \end{bmatrix}  \big)
    \Big).
\end{equation}
\end{linenomath*}
We seek to calculate the distribution of the two components at the next time step conditioned on the previous $L$ values. In other words, we need to calculate the  predictive distribution of $[z_t^{(1)}, z_t^{(2)}]^\top$ at time $t^*$ for the lag $L$, which is expressed as
\begin{linenomath*}
\begin{equation}\label{eq:emp-conditional-distribute}
    p \Big( \left. \begin{bmatrix}
        z_{t^*}^{(1)}\\
        z_{t^*}^{(2)}
    \end{bmatrix} \right|
    \begin{bmatrix}
        z_{t^*-L:t^*-1}^{(1)}\\
        z_{t^*-L:t^*-1}^{(2)}
    \end{bmatrix}
     \Big) 
    = \calN ( 
    \bfmu_{t^*}
    , 
   \bfK_{t^*}
    ),.
\end{equation}
\end{linenomath*}
The predictive mean and covariance, $\bfmu_{t^*} \in \bbR^{2 \times 1}$, $\bfK_{t^*} \in \bbR^{2 \times 2}$, are estimated by following (\ref{eq:emp-conditional-mean}) and (\ref{eq:emp-conditional-variance}):
\begin{linenomath*}
\begin{align}\label{eq:emp-conditional-mean}
    \bfmu_{t^*} =&\; \bbE \Big[\left. 
    \begin{bmatrix}z_{t^*}^{(1)}\\
   z_{t^*}^{(2)}
    \end{bmatrix} \right|
    \begin{bmatrix}
        z_{t^*-L:t^*-1}^{(1)}\\
        z_{t^*-L:t^*-1}^{(2)}
    \end{bmatrix}
     \Big]\nonumber\\ 
      =&\; \bbE \Big[ \begin{bmatrix}
        z_{t^*}^{(1)}\\
        z_{t^*}^{(2)}
    \end{bmatrix} \Big]
    + \Cov \Big[  \begin{bmatrix}
        z_{t^*}^{(1)}\\
        z_{t^*}^{(2)}
    \end{bmatrix},
    \begin{bmatrix}
        z_{t^*-L:t^*-1}^{(1)}\\
        z_{t^*-L:t^*-1}^{(2)}
    \end{bmatrix}
     \Big]\nonumber\\
     &\; \Cov \Big[  \begin{bmatrix}
        z_{t^*-L:t^*-1}^{(1)}\\
        z_{t^*-L:t^*-1}^{(2)}
    \end{bmatrix},
    \begin{bmatrix}
        z_{t^*-L:t^*-1}^{(1)}\\
        z_{t^*-L:t^*-1}^{(2)}
    \end{bmatrix}
     \Big] ^{-1}
    \left( 
    \begin{bmatrix}
        z_{t^*-L:t^*-1}^{(1)}\\
        z_{t^*-L:t^*-1}^{(2)}
    \end{bmatrix}
    -\bbE\Big[ \begin{bmatrix}
        z_{t^*-L:t^*-1}^{(1)}\\
        z_{t^*-L:t^*-1}^{(2)}
    \end{bmatrix} \Big] 
    \right)\nonumber\\
    \approx &\; \bbE\Big[ \begin{bmatrix}
        \bfy^{(1)}\\
        \bfy^{(2)}
    \end{bmatrix} \Big] 
    + \Cov \Big[  \begin{bmatrix}
        \bfy^{(1)}\\
        \bfy^{(2)}
    \end{bmatrix},
    \begin{bmatrix}
        \bfX^{(1)}\\
        \bfX^{(2)}
    \end{bmatrix}
     \Big]\nonumber\\
    &\; \Cov \Big[  \begin{bmatrix}
        \bfX^{(1)}\\
        \bfX^{(2)}
    \end{bmatrix},
    \begin{bmatrix}
        \bfX^{(1)}\\
        \bfX^{(2)}
    \end{bmatrix}
     \Big] ^{-1}
    \left( 
    \begin{bmatrix}
        z_{t^*-L:t^*-1}^{(1)}\\
        z_{t^*-L:t^*-1}^{(2)}
    \end{bmatrix}
    -\bbE\Big[ \begin{bmatrix}
        \bfX^{(1)}\\
        \bfX^{(2)}
    \end{bmatrix} \Big] 
    \right),
\end{align}
\end{linenomath*}
\begin{linenomath*}
\begin{align}\label{eq:emp-conditional-variance}
\bfK_{t^*} = &\; \Cov \Big[  \left. \begin{bmatrix}
        z_{t^*}^{(1)}\\
        z_{t^*}^{(2)}
    \end{bmatrix} \right|
    \begin{bmatrix}
        z_{t^*-L:t^*-1}^{(1)}\\
        z_{t^*-L:t^*-1}^{(2)}
    \end{bmatrix}
     \Big]\nonumber\\
    = &\; \Cov \Big[   \begin{bmatrix}
        z_{t^*}^{(1)}\\
        z_{t^*}^{(2)}
    \end{bmatrix},
    \begin{bmatrix}
        z_{t^*}^{(1)}\\
        z_{t^*}^{(2)}
    \end{bmatrix}
     \Big]
    - \Cov \Big[   \begin{bmatrix}
        z_{t^*}^{(1)}\\
        z_{t^*}^{(2)}
    \end{bmatrix},
    \begin{bmatrix}
        z_{t^*-L:t^*-1}^{(1)}\\
        z_{t^*-L:t^*-1}^{(2)}
    \end{bmatrix}
     \Big]\nonumber\\
    &\; \Cov \Big[   \begin{bmatrix}
        z_{t^*-L:t^*-1}^{(1)}\\
        z_{t^*-L:t^*-1}^{(2)}
    \end{bmatrix},
    \begin{bmatrix}
        z_{t^*-L:t^*-1}^{(1)}\\
        z_{t^*-L:t^*-1}^{(2)}
    \end{bmatrix}
     \Big]^{-1} 
     \Cov \Big[\begin{bmatrix}
        z_{t^*-L:t^*-1}^{(1)}\\
        z_{t^*-L:t^*-1}^{(2)}
    \end{bmatrix},
    \begin{bmatrix}
    z_{t^*}^{(1)}\\
    z_{t^*}^{(2)}
    \end{bmatrix}
     \Big]\nonumber\\
    \approx &\; \Cov\Big[\begin{bmatrix}
        \bfy^{(1)}\\
        \bfy^{(2)}
    \end{bmatrix}, \begin{bmatrix}
        \bfy^{(1)}\\
        \bfy^{(2)}
    \end{bmatrix} \Big]
    - \Cov \Big[\begin{bmatrix}
        \bfy^{(1)}\\
        \bfy^{(2)}
    \end{bmatrix},
    \begin{bmatrix}
        \bfX^{(1)}\\
        \bfX^{(2)}
    \end{bmatrix}
     \Big]\nonumber\\
    &\; \Cov \Big[\begin{bmatrix}
        \bfX^{(1)}\\
        \bfX^{(2)}
    \end{bmatrix},
    \begin{bmatrix}
        \bfX^{(1)}\\
        \bfX^{(2)}
    \end{bmatrix}
     \Big] ^{-1}
    \Cov \Big[\begin{bmatrix}
    \bfX^{(1)}\\
    \bfX^{(2)}
    \end{bmatrix},
    \begin{bmatrix}
    \bfy^{(1)}\\
    \bfy^{(2)}
    \end{bmatrix}
     \Big], 
\end{align}
\end{linenomath*}
where
\begin{linenomath*}
\begin{equation}\label{eq:emp-conditional-approx}
    \bfX^{(j)} =\begin{bmatrix}
        z_{1:L}^{(j)}\\
        z_{2:L+1}^{(j)}\\
        \vdots\\
        z_{n:L+n-1}^{(j)}
    \end{bmatrix}^\top \in \bbR^{L \times n},
    \quad \bfy^{(j)} = \begin{bmatrix}
        z_{L+1}^{(j)}\\
        z_{L+2}^{(j)}\\
        \vdots\\
        z_{L+n}^{(j)}
    \end{bmatrix}^\top \in \bbR^{1 \times n},
    \quad j = 1,2,
\end{equation}
\end{linenomath*}
and
\begin{linenomath*}
\begin{equation}\label{eq:construct mjo rolling window}
    \bfX^{(1:2)} \coloneqq \begin{bmatrix}
        \bfX^{(1)}\\
        \bfX^{(2)}
    \end{bmatrix} \in \bbR^{2L \times n}, \quad \bfy^{(1:2)} \coloneqq \begin{bmatrix}
        \bfy^{(1)}\\
        \bfy^{(2)}
    \end{bmatrix} \in \bbR^{2 \times n}.
\end{equation}
\end{linenomath*}
In equations (\ref{eq:emp-conditional-mean}) and (\ref{eq:emp-conditional-variance}) we use the empirical mean and covariance of $n$ batches of training data with lag $L$ to approximate the expectation of the target and the covariance of the target and predictors. 

\begin{table}[htb]
\label{alg:gp for MJO}
\begin{tabular}{ | c | p{12cm} | }

\hline
Workflow & Parameters\\
\hline

\multirow{8}{*}{%
{Input}} & $n:$ number of samples in the training dataset\\
& $L:$ number of lags\\
& $t_v:$ start index for the predictions in validation dataset\\
& $t_0:$ start index for the predictions in testing dataset\\
& $\tau:$ forecast lead time\\
& $\{ [z_t^{(1)},z_t^{(2)}] \}_{t=1}^{L+n}:$ training dataset\\
& $\{ [z_t^{(1)},z_t^{(2)}] \}_{t=t_v}^{t_v+L+\tau+n_v-2}:$ validation set\\ 
& $\{ [z_t^{(1)},z_t^{(2)}] \}_{t=t_0-L}^{t_0-1}:$ starting predictors in test set\}\\
\hline
\multirow{16}{*}{\rotatebox[origin=c]{90}
{Computation steps}} & 1. Construct the training dataset $\calD^{(1:2)}=\{ \bfX^{(1:2)}, \bfy^{(1:2)} \}$ by equations (\ref{eq:construct mjo rolling window}) and  (\ref{eq:emp-conditional-approx}), $\bfX^{(1:2)} \in \bbR^{2L \times n}$, $\bfy^{(1:2)} \in \bbR^{2 \times n}$ \\
& 2. Compute $\bbE[\bfy^{(1:2)}]$ \\
& 3. Obtain $\Cov\big[\begin{bmatrix}
    \bfX^{(1:2)}\\
    \bfy^{(1:2)}
\end{bmatrix}, \begin{bmatrix}
    \bfX^{(1:2)}\\
    \bfy^{(1:2)}
\end{bmatrix}\big]=\begin{bmatrix}
    \Cov[\bfX^{(1:2)},\bfX^{(1:2)}] & \Cov[\bfX^{(1:2)},\bfy^{(1:2)}]\\
    \Cov[\bfy^{(1:2)},\bfX^{(1:2)}] & \Cov[\bfy^{(1:2)},\bfy^{(1:2)}]
\end{bmatrix}
$ by cubic spline interpolation\\
& 4. In the validation set, obtain the $\{ \bfmu_t, \bfK_t \}_{t=t_v+L+i-1}^{t_v+L+\tau+i-2}$ condition on $\{[z_t^{(1)},z_t^{(2)}]\}_{t=t_v+t-1}^{t_v+L+i-2}$ for $i=1,\ldots,n_v$ by equations (\ref{eq:emp-conditional-mean}) and (\ref{eq:emp-conditional-variance}); here $\bfK_t$ is equivalent for all $t$ \\
& 5. In the validation set, obtain modified covariances as a function of lead time $\{ \Tilde{\bfK}_{t_v}(t-t_v+1) \}_{t=t_v}^{t_v+\tau-1}$ by (\ref{eq:bias of var}) and (\ref{eq:scale cov})\\
& 6. In the test set, obtain $\{\bfmu_t\}_{t=t_0}^{t_0+\tau-1}$ by equation (\ref{eq:emp-conditional-mean}) \\
& 7. In the test set, apply the covariances obtained in the validation set to the covariances in the test set according to the corresponding lead time, $\Tilde{\bfK}_{t_0}(l) \gets  \Tilde{\bfK}_{t_v}(l)$, $l=1,\ldots,\tau$\\
& 8. Return $\bfmu_{t}$, $\Tilde{\bfK}_{t_0}(t-t_0+1)$, $t=t_0, \ldots, t_0+\tau-1$\\

\hline

\multirow{2}{*}{%
{Output}} & $\{ \bfmu_{t} \}_{t=t_0}^{t_0+\tau-1}:$ predicted mean of $\{ [\hatz_{t}^{(1)},\hatz_{t}^{(2)}] \}_{t=t_0}^{t_0+\tau-1}$\\
& $ \{ \Tilde{\bfK}_{t_0}(t-t_0+1) \}_{t=t_0}^{t_0+\tau-1}:$ predicted covariance of $\{ [\hatz_{t}^{(1)},\hatz_{t}^{(2)}] \}_{t=t_0}^{t_0+\tau-1}$\\
\hline
\end{tabular}
\vspace{0.3em}
\caption{GP model for the MJO forecast}
\end{table}

\subsection{Covariance update}
\label{subsec:cov update}
The forecast lead time is reached by repeated one-step predictions. Therefore, the covariance $\bfK_{t^*}$ in equation (\ref{eq:emp-conditional-variance}) is related only to the value of lag $L$, which is 40 or 60 in our study 
and is unrelated to the lead time $\tau$ or the predictor values. However, as we predict for longer lead times, the predictive variance should increase because of the uncertainties introduced by replacing observations by predicted values. To account for this additional uncertainty, we propose the following covariance correction. For each lead time we use a validation set of size $n_v(L)$  
with lag $L$ 
to compute the averaged variance bias between the posterior mean and true observations. Hence, the corrected variance $\Tilde{V}_*^{(j)}(\tau)$ is given by
\begin{linenomath*}
\begin{equation}\label{eq:bias of var}
\begin{aligned}
    \Tilde{V}_*^{(j)}(\tau) \coloneqq \Var[z_{t^*}^{(j)}(\tau)] &\approx \Var[ \hatz_{t^*}^{(j)}(\tau) ] + \mathrm{Bias}\Big( \hatz_{t^*}^{(j)}(\tau), z_{t^*}^{(j)}(\tau) \Big)^2,\\
    &\approx \bfK_{t^*}[j,j] + \frac{1}{n_v}\sum_{t=1}^{n_v} \big(\hatz_{t}^{(j)}(\tau) - z_{t}^{(j)}(\tau) \big)^2,
\end{aligned}
\end{equation}
\end{linenomath*}
where $\hatz_{t^*}^{(j)}(\tau)$ is the predicted value for lead time $\tau$ obtained by the above iteration, $z_{t^*}^{(j)}(\tau)$ is the corresponding true observation, and $\bfK_{t^*}[j,j]$ is the $[j,j]$th entry of the covariance matrix $\bfK_{t^*}$, $j=1,2$. Then we scale the $\bfK_{t^*}$ to the corrected covariance $\Tilde{\bfK}_{t^*}(\tau)$ for lead time $\tau$ in (\ref{eq:scale cov}) by using the variances $\{ \Tilde{V}_*^{(j)}(\tau) \}_{j=1}^2$.
Therefore, the corrected covariance $\Tilde{\bfK}_{t^*}(\tau)$ corresponds to the lead time $\tau$ and can be scaled via the following transformation:
\begin{linenomath*}
\begin{equation}\label{eq:scale cov}
    \bfK_{t^*} = \begin{bmatrix}
        \bfK_{t^*}[1,1] & \bfK_{t^*}[1,2]\\
        \bfK_{t^*}[2,1] & \bfK_{t^*}[2,2]
    \end{bmatrix}
    \longrightarrow
    \Tilde{\bfK}_{t^*}(\tau)=
    \begin{bmatrix}
        \Tilde{V}_*^{(1)}(\tau) & \frac{ \bfK_{t^*}[1,2]
        \sqrt{\Tilde{V}_*^{(1)}(\tau)} \sqrt{\Tilde{V}_*^{(2)}(\tau)}
         }{ \sqrt{\bfK_{t^*}[1,1]} \sqrt{\bfK_{t^*}[2,2]} }\\
        \frac{ \bfK_{t^*}[2,1] \sqrt{\Tilde{V}_*^{(1)}(\tau)} \sqrt{\Tilde{V}_*^{(2)}(\tau)}
        }{ \sqrt{\bfK_{t^*}[1,1]} \sqrt{\bfK_{t^*}[2,2]} } & \Tilde{V}_*^{(2)}(\tau),
    \end{bmatrix}
\end{equation}
\end{linenomath*}
where $\Tilde{V}_*^{(1)}(\tau)$ and $\Tilde{V}_*^{(2)}(\tau)$ are defined in equation \eqref{eq:bias of var}. This corrected covariance is ultimately used to estimate the confidence region described below.

\subsection{Estimation of the confidence region}
\label{subsec:confidence region}
To obtain the confidence region of the distribution $\calN(\bfmu_{t^*}, \Tilde{\bfK}_{t^*}(\tau))$, we first introduce Lemmas \ref{lem:conf region} and  \ref{lem:hyperellipsoid} as follows.
\begin{lemma}\label{lem:conf region}
    \textnormal{(Result 4.7 in Section 4.2 in \cite{johnson2002applied})}
    Let $\calN_p(\bfmu,\bfSigma)$ denote a $p$-variate normal distribution with location $\bfmu$ and known covariance $\bfSigma$. Let $\bfx \sim \calN_p(\bfmu,\bfSigma)$. Then
    \begin{enumerate}[label=(\alph*)]
    \item $(\bfx-\bfmu)^\top \bfSigma^{-1} (\bfx-\bfmu)$ is distributed as $\chi_p^2$, where $\chi_p^2$ denotes the chi-square distribution with $p$ degrees of freedom.
    \item The $\calN_p(\bfmu,\bfSigma)$ distribution assigns probability $1-\alpha$ to the solid hyperellipsoid $\{ \bfx \colon (\bfx-\bfmu)^\top \bfSigma^{-1} (\bfx-\bfmu) \leq \chi_p^2(\alpha)  \}$, where $\chi_p^2(\alpha)$ denotes the upper $(100\alpha)$th percentile of the $\chi_p^2$ distribution.
    \end{enumerate}
\end{lemma}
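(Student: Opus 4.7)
The plan is to reduce both parts of the lemma to standard facts about the standard multivariate normal through a whitening change of variables. First I would exploit the fact that $\bfSigma$ is symmetric and positive definite (as the covariance of a nondegenerate $p$-variate normal), so by the spectral theorem it admits a unique symmetric positive definite square root $\bfSigma^{1/2}$ whose inverse $\bfSigma^{-1/2}$ is likewise symmetric positive definite; any full-rank factor (for example, a Cholesky factor) would do equally well. I then define the whitened vector $\bfz := \bfSigma^{-1/2}(\bfx-\bfmu)$ and invoke the affine transformation rule for Gaussians to conclude that $\bfz \sim \calN_p(\mathbf{0},\bfI)$, so its components $z_1,\ldots,z_p$ are i.i.d.\ standard normal random variables.

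For part (a), the conclusion follows by a one-line algebraic rearrangement,
\begin{equation*}
(\bfx-\bfmu)^\top \bfSigma^{-1} (\bfx-\bfmu) \;=\; (\bfx-\bfmu)^\top \bfSigma^{-1/2}\bfSigma^{-1/2} (\bfx-\bfmu) \;=\; \bfz^\top \bfz \;=\; \sum_{i=1}^p z_i^2,
\end{equation*}
which is by definition $\chi_p^2$-distributed (a sum of squares of $p$ i.i.d.\ standard normals). Part (b) is then an immediate corollary of (a) together with the defining property of the upper percentile: for $Y \sim \chi_p^2$, one has $P(Y \leq \chi_p^2(\alpha)) = 1-\alpha$. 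Taking $Y := (\bfx-\bfmu)^\top \bfSigma^{-1} (\bfx-\bfmu)$ and applying (a) shows that the solid hyperellipsoid $\{\bfx : (\bfx-\bfmu)^\top \bfSigma^{-1} (\bfx-\bfmu) \leq \chi_p^2(\alpha)\}$ carries mass exactly $1-\alpha$ under $\calN_p(\bfmu,\bfSigma)$, as claimed.

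The hard part of this proof is essentially absent: the only real content lies in the existence and well-definedness of $\bfSigma^{1/2}$ together with the affine transformation rule for Gaussians, both of which are standard prerequisites. The one place where care would be needed is if $\bfSigma$ were allowed to be singular (i.e., the distribution degenerate in some direction), in which case $\bfSigma^{-1}$ would have to be reinterpreted as a Moore–Penrose pseudoinverse and the degrees of freedom adjusted to $\operatorname{rank}(\bfSigma)$. Since the statement implicitly assumes $\bfSigma$ is invertible (by writing $\bfSigma^{-1}$ directly), this complication does not arise and the remainder of the proof is pure bookkeeping after the whitening step.
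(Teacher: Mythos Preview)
Your proposal is correct and complete. The paper itself does not supply an independent proof of this lemma; it simply refers the reader to Result 4.7 in \cite{johnson2002applied}. Your whitening argument via $\bfz = \bfSigma^{-1/2}(\bfx-\bfmu)$, reducing the quadratic form to $\sum_{i=1}^p z_i^2$ with $z_i$ i.i.d.\ standard normal, is exactly the standard textbook derivation and is what one finds in that reference, so there is no substantive difference to report.
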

\begin{proof}
    See proof of Result 4.7 in Section 4.2 in \cite{johnson2002applied}.
\end{proof}
\begin{lemma}\label{lem:hyperellipsoid}
    \textnormal{((4-7) in Section 4.2 in \cite{johnson2002applied})} 
    The hyperellipsoids $\{ \bfx: (\bfx - \bfmu)^\top \bfSigma^{-1} (\bfx-\bfmu)  =c^2 \}$ are centered at $\bfmu$ and have axes $\pm c \sqrt{\lambda_i}\, \bfe_i $, where $\lambda_i$'s, $\bfe_i$'s are the eigenvalues and eigenvectors of $\bfSigma$, namely, $\bfSigma \bfe_i = \lambda_i \bfe_i$, $i=1,2,\ldots,p$.
\end{lemma}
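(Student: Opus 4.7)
The plan is to exploit the spectral decomposition of $\bfSigma$, which is available because $\bfSigma$ is a symmetric positive definite covariance matrix. First I would write $\bfSigma = \sum_{i=1}^{p} \lambda_i \bfe_i \bfe_i^{\top}$ with $\{\bfe_i\}_{i=1}^{p}$ orthonormal, so that $\bfSigma^{-1} = \sum_{i=1}^{p} \lambda_i^{-1} \bfe_i \bfe_i^{\top}$. The centering of the hyperellipsoid is then immediate: the quadratic form $(\bfx-\bfmu)^\top \bfSigma^{-1}(\bfx-\bfmu)$ vanishes at $\bfx = \bfmu$ and is invariant under the substitution $\bfx - \bfmu \mapsto -(\bfx-\bfmu)$, so the level set is symmetric about $\bfmu$.

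Next I would make the change of variable $\bfy = \bfx - \bfmu$ and expand $\bfy$ in the eigenbasis via the coordinates $y_i = \bfe_i^{\top}\bfy$, so that $\bfy = \sum_{i=1}^{p} y_i \bfe_i$. Substituting and using orthonormality of $\{\bfe_i\}$ reduces the defining equation to the diagonal form
\begin{equation*}
\sum_{i=1}^{p} \frac{y_i^{2}}{\lambda_i} \;=\; c^{2},
\end{equation*}
which is the canonical equation of a hyperellipsoid with semi-axes $c\sqrt{\lambda_i}$ along the coordinate directions $\bfe_i$.

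To finish, I would identify the axes explicitly. Setting all coordinates except $y_i$ equal to zero forces $y_i^{2}/\lambda_i = c^{2}$, hence $y_i = \pm c\sqrt{\lambda_i}$, and translating back gives the endpoints $\bfx = \bfmu \pm c\sqrt{\lambda_i}\,\bfe_i$, as claimed. I would note that positive definiteness of $\bfSigma$ ensures $\lambda_i > 0$, so the square roots are well defined and the level set is a bounded ellipsoid rather than a degenerate or unbounded quadric.

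There is no real obstacle; the argument is entirely linear algebraic and the only subtlety is bookkeeping the change of basis carefully and invoking positive definiteness to justify the eigen-decomposition with positive eigenvalues. If anything, the main pedagogical step is making the reader see why the \emph{coordinate axes in the rotated frame} correspond to the eigenvectors in the original frame; this follows directly from the fact that the orthogonal transformation $\bfy \mapsto (y_1,\dots,y_p)^{\top}$ sends $\bfe_i$ to the $i$th standard basis vector.
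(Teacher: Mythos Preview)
Your proposal is correct and follows essentially the same route as the paper's proof: both rely on the spectral decomposition of the positive definite matrix $\bfSigma$ to identify the eigenpairs $(1/\lambda_i,\bfe_i)$ of $\bfSigma^{-1}$ and thereby read off the axes of the quadric. The only difference is presentational: the paper cites Result~4.1 of \cite{johnson2002applied} and then appeals directly to ``the definition of the hyperellipsoid in quadratic form,'' whereas you spell out the change of coordinates $y_i=\bfe_i^{\top}(\bfx-\bfmu)$ and the resulting canonical equation $\sum_{i=1}^{p} y_i^{2}/\lambda_i=c^{2}$ explicitly.
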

\begin{proof}
    From Result 4.1 in Section 4.2 in \cite{johnson2002applied} we know that if $\bfSigma$ is positive definite and $\bfSigma\bfe_i = \lambda_i \bfe_i$, then $\lambda_i>0$ and $\bfSigma^{-1} \bfe_i = \frac{1}{\lambda_i} \bfe_i$. That is, $(\frac{1}{\lambda_i},\bfe_i )$ is an eigenvalue-eigenvector pair for $\bfSigma^{-1}$. According to the definition of the hyperellipsoid in quadratic form, we can conclude that the hyperellipsoids $\{ \bfx: (\bfx - \bfmu)^\top \bfSigma^{-1} (\bfx-\bfmu)  =c^2 \}$ are centered at $\bfmu$ and have axes $\pm c \sqrt{\lambda_i}\, \bfe_i $.
\end{proof}
According to the above lemmas, the $(1-\alpha)$ confidence region of the $p$-variate normal distribution is a hyperellipsoid bounded by $\chi_p^2(\alpha)$, the chi-square distribution with $p$ degrees of freedom at the level $\alpha$ \cite{johnson2002applied}. Therefore, we can construct a confidence region for the prediction $[\hatz_{t^*}^{(1)}(\tau),\hatz_{t^*}^{(2)}(\tau)]^\top$ at lead time $\tau$, where $[\hatz_{t^*}^{(1)}(\tau),\hatz_{t^*}^{(2)}(\tau)]^\top \sim \calN(\bfmu_{t^*}, \Tilde{\bfK}_{t^*}(\tau))$ after updating the covariance.

\section*{Data Availability Statement}
The daily MJO RMM index dataset is available through the Bureau of Meteorology (\url{http://www.bom.gov.au/}) and can be accessed at \url{http://www.bom.gov.au/climate/mjo/}. The codes for the numerical experiments in this work can be found at \url{https://doi.org/10.5281/zenodo.13654353} \cite{hchen19_2024_13654353}.

\acknowledgments
This material is based upon work supported by the U.S. Department of Energy, Office of Science, Office of Advanced Scientific Computing Research (ASCR) and the SciDAC FASTMath Institute  programs under Contract No. DE-AC02-06CH11357 and Argonne National Laboratory Directed Research and Development (LDRD) program. We would also like to thank the reviewers for their constructive criticism and Hannah Christensen for her valuable help with using and interpretation of the ECMWF MJO dataset.

\bibliography{refs}

\end{document}